\newtheorem{thm}{Theorem}
\newtheorem{cor}{Corollary}
\newtheorem{prop}{Proposition}
\newtheorem{defn}{Definition}
\newtheorem{rem}{Remark}
\newtheorem{conj}{Conjecture}
\newtheorem{prob}{Problem}
\newtheorem{example}{Example}
\newcommand{\Eu}{\mathbb E}
\newcommand{\B}{\mathbf B}
\renewcommand{\S}{\mathbb{S}}
\newcommand{\HH}{\mathbb{H}}
\newcommand{\K}{\mathbf{K}}
\newcommand{\FF}{\mathbf{F}}
\newcommand{\q}{\mathbf{q}}
\newcommand{\p}{\mathbf{p}}
\newcommand{\U}{\mathbf{U}}
\newcommand{\V}{\mathbf{V}}
\DeclareMathOperator{\inter}{int}
\DeclareMathOperator{\bd}{bd}
\DeclareMathOperator{\card}{card}
\DeclareMathOperator{\relint}{relint}
\newcommand{\ve}{\mathbf{v}}
\newcommand{\PP}{\mathbf{P}}
\newcommand{\HHH}{\mathbf{H}}
\newcommand{\x}{\mathbf{x}}
\newcommand{\y}{\mathbf{y}}
\newcommand{\z}{\mathbf{z}}
\renewcommand{\c}{\mathbf{c}}
\newcommand{\D}{\mathbf{D}}
\newcommand{\h}{\mathbf{h}}
\newcommand{\m}{\mathbf{m}}
\renewcommand{\o}{\mathbf{o}}
\title{From spherical to Euclidean illumination 
\footnote{Keywords and phrases: spherical space, Euclidean space, convex body, illumination number. \newline \hspace*{.35cm} 2010 Mathematics Subject Classification: 52A20, 52A55.}}
\author{K\'{a}roly Bezdek\thanks{Partially supported by a Natural Sciences and 
Engineering Research Council of Canada Discovery Grant.} and Zsolt L\'angi\thanks{Partially supported by the National Research, Development and Innovation Office, NKFI, K-119670, the J\'anos Bolyai Research Scholarship of the Hungarian Academy of Sciences, and grants BME FIKP-V\'IZ of EMMI and \'UNKP-19-4 New National Excellence Program of the Ministry for Innovation and Technology.} 
}
\date{}
\begin{document}

\maketitle

\begin{abstract}
In this note we introduce the problem of illumination of convex bodies in spherical spaces and solve it for a large subfamily of convex bodies. We derive from it
a combinatorial version of the classical illumination problem for convex bodies in Euclidean spaces as well as a solution to that for a large subfamily of convex bodies, which in dimension three leads to special Koebe polyhedra.
\end{abstract}

\section{Introduction}\label{sec:intro}

Let  $\Eu^d$ denote the $d$-dimensional Euclidean space with the unit sphere $\S^{d-1}=\{\x\in\Eu^d\ |\ \langle\x ,\x\rangle=1\}$ centered at the origin $\mathbf{o}$, where $\langle\cdot\rangle$ stands for the standard inner product of $\Eu^d$. We identify $\S^{d-1}$ with the $(d-1)$-dimensional spherical space.  A compact convex set (resp., a compact spherically convex set) with nonempty interior is called a \emph{convex body} in $\Eu^d$ (resp.,  $\S^{d-1}$). (Here, we call a subset of $\S^{d-1}$ {\it spherically convex} if it is contained in an open hemisphere of $\S^{d-1}$ moreover, for any two points of the set the spherical segment, i.e., the shorter unit circle arc connecting them belongs to the set.) Now, recall the following concept of illumination due to Boltyanski \cite{boltyanski1}. (For an equivalent notion of illumination using point sources instead of directions see  Hadwiger \cite{hadwiger2}.) Let $\K$ be a convex body in $\Eu^d$, let $\p \in \bd \K$, i.e., let $\p$ be a boundary point of $\K$, and let $\ve \in \S^{d-1}$ be a direction. We say that $\p$ is \emph{illuminated} from the direction $\ve$, if the half-line with endpoint $\p$ and direction $\ve$ intersects the interior of the convex body $\K$. We say that the directions $\ve_1,\ve_2,\ldots, \ve_m \in \S^{d-1}$ \emph{illuminate} $\K$, if every boundary point is illuminated from some $\ve_i$ for $1 \leq i \leq m$. The smallest number of directions that illuminate $\K$ is called the \emph{illumination number} of $\K$, and is denoted by $I(\K)$. It is easy to see that $I(\K) \geq d+1$ for any convex body $\K$ in $\Eu^d$. On the other hand, since no two distinct vertices of an affine $d$-cube can be illuminated from the same direction, it follows that $I(\K) = 2^d$ holds for any affine $d$-cube $\K$. The following Illumination Conjecture \cite{boltyanski1, hadwiger2} of Hadwiger and Boltyanski is a longstanding open problem in discrete geometry solved only in the plane. For a recent comprehensive survey on the numerous partial results on this conjecture  see \cite{bezdek-khan} (which surveys also the relevant results on the equivalent Covering Conjecture  \cite{gohberg1,  hadwiger1, levi1} as well as Separation Conjecture \cite{bezdek-conj1, bezdek-conj2, soltan1}).


\begin{conj}[Illumination Conjecture]\label{conj:illumination}
The illumination number $I(\mathbf{K})$ of any $d$-dimensional convex body $\mathbf{K}$, $d\geq 3$, is at most $2^d$ and $I(\K) = 2^d$ only if $\mathbf{K}$ is an affine $d$-cube.
\end{conj}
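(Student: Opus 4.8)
The plan is to pass first to the equivalent covering formulation and then attack it through a combination of volumetric estimates and the normal-cone structure of the boundary. I would begin by invoking the classical equivalence (the Covering Conjecture of Gohberg--Markus, cf.\ \cite{gohberg1,hadwiger1,levi1}): for a convex body $\K$ the number $I(\K)$ equals the least number of translates of a homothet $\lambda\K$, with $0<\lambda<1$, whose union covers $\K$. Dually, recording at each $\p\in\bd\K$ the normal cone $N(\p)$, a family $\ve_1,\ldots,\ve_m\in\S^{d-1}$ illuminates $\K$ precisely when the spherical image $N(\p)\cap\S^{d-1}$ of every normal cone lies in one of the open hemispheres $\{\mathbf{u}\in\S^{d-1}:\langle\ve_i,\mathbf{u}\rangle<0\}$. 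This turns the problem into covering the sphere of outer normals by few hemispheres, each of which must swallow whole spherical images of normal cones, which is exactly the spherical viewpoint of the present note; so the first concrete step is to recast Conjecture~\ref{conj:illumination} in those terms.

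Second, I would estimate the covering number volumetrically. A random placement of translates of $\lambda\K$ covers $\K$ with expected cardinality controlled by the covering density of $\K$ (Rogers' economical covering, of order $d\ln d$) times the ratio $\operatorname{vol}(\K-\lambda\K)/\operatorname{vol}(\lambda\K)$, and letting $\lambda\to1^-$ the Rogers--Shephard inequality $\operatorname{vol}(\K-\K)\le\binom{2d}{d}\operatorname{vol}(\K)$ bounds this ratio. This yields $I(\K)\le\binom{2d}{d}(d\ln d+d\ln\ln d+5d)$ for large $d$. The crucial point, which I would not try to hide, is that $\binom{2d}{d}$ is of order $4^d$, so this line proves the conjecture only up to an exponential factor in the general case; when $\K$ is origin-symmetric the difference body is $2\K$, the ratio collapses to $2^d$, and one reaches $2^d$ up to a polynomial factor in $d$. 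Thus the symmetric case is essentially a matter of shaving a $\operatorname{poly}(d)$ overhead, whereas the general case suffers from the wrong exponential base.

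Third, to replace $4^d$ by the exact $2^d$ and to characterize equality, I would try to supplement volume with the combinatorial structure of the Gauss image: induct on $d$ by decomposing $\bd\K$ into caps whose normal cones project to a low-dimensional spherical covering, illuminate each cap by the $(d{-}1)$-dimensional bound, and pay only a bounded multiplicative price per dimension, aiming for the clean doubling $I_d\le 2\,I_{d-1}$ that would give $2^d$; the equality case would then be approached by a rigidity argument showing that $2^d$ illuminating directions force $2^d$ disjoint full-dimensional normal cones tiling $\S^{d-1}$ into orthant-like pieces, which should pin $\K$ down to an affine cube. I fully expect this third step to be the genuine obstacle: no known technique delivers an inductive gain better than the volumetric one for non-symmetric bodies, the difference body destroys the asymmetry that a sharp argument must exploit, and controlling how a single hemisphere can capture several distinct normal cones without overcounting is exactly the difficulty that has kept Conjecture~\ref{conj:illumination} open in all dimensions $d\ge 3$. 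Consequently, a realistic outcome of this plan is not a full proof but a sharpening for structured subfamilies, which is the route this note in fact takes.
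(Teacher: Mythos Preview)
The statement you are asked to prove is labeled \emph{Conjecture}~\ref{conj:illumination} in the paper, and the paper does not prove it: it is explicitly presented as ``a longstanding open problem in discrete geometry solved only in the plane.'' There is therefore no ``paper's own proof'' to compare your proposal against.

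Your write-up is a competent survey of the standard reformulations (covering by smaller homothets, Gohberg--Markus; the normal-cone / hemisphere viewpoint) and of the known volumetric bounds (Rogers' covering density combined with the Rogers--Shephard inequality, giving roughly $\binom{2d}{d}\cdot\operatorname{poly}(d)$ in general and $2^d\cdot\operatorname{poly}(d)$ in the symmetric case). You yourself correctly identify that the third step---getting from $4^d$ down to exactly $2^d$ and pinning down the equality case---is where every known approach breaks, and you concede that ``a realistic outcome of this plan is not a full proof.'' That is an accurate assessment: the inductive doubling $I_d\le 2I_{d-1}$ you sketch is precisely the missing ingredient that no one knows how to establish, and the rigidity argument for equality is equally out of reach. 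So what you have written is not a proof proposal but an annotated outline of why the conjecture is hard, which is appropriate given that no proof exists.
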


In this paper, we introduce the following notion of illumination (resp., illumination number) for convex bodies in spherical space.

\begin{defn}\label{defn:spherical_ill}
Let $\K \subset \S^d$ be a convex body, and let $\p \in \S^d \setminus \K$. We say that a boundary point $\q \in \bd \K$
is \emph{illuminated from $\p$} if it is not antipodal to $\p$, the spherical segment with endpoints $\p$ and $\q$ does not intersect the interior $\inter \K$ of $\K$, and the greatcircle through $\p$ and $\q$ does. We say that $\K$ is \emph{illuminated} from a set $S \subset \S^d \setminus \K$, if every boundary point of $\K$ is illuminated from at least one point of $S$. The smallest cardinality of a set $S$ that illuminates $\K$ and lies on a $(d-1)$-dimensional greatsphere of $\S^d$ which is disjoint from $\K$,  is called the \emph{illumination number} of $\K$ in $\S^d$, and is denoted by $I_{\S^d}(\K)$.
\end{defn}

We observe that dropping the seemingly artificial restriction that all light sources are contained in a $(d-1)$-dimensional greatsphere of $\S^d$, disjoint from the convex body $\K$, makes the problem of determining $I_{\S^d}(\K)$ trivial. Indeed, choosing any point of $\S^d$, antipodal to an arbitrary interior point of $\K$, illuminates every boundary point of $\K$.

We leave the easy proofs of the following three remarks to the reader.

\begin{rem}\label{rem:central_illumination}
If a set $A\subset  \S^d\setminus \K$ illuminates the convex body $\K$ in $\S^d$ and $A$ is contained in a closed hemisphere $\overline{\HHH}\subset \S^d$ such that $\K \subset \inter \overline{\HHH}$, then $I_{\S^d}(\K) \leq \card (A)$. (Here $\card (\cdot)$ refers to the cardinality of the corresponding set.)
\end{rem}


\begin{rem}\label{reduction-to-Euclidean}
Let $\K^*$ be the {\rm polar body} assigned to the convex body $\K \subset \S^d \subset \Eu^{d+1}$, i.e., let $\K^* := \{ \x \in \S^d : \langle \x, \y \rangle \leq 0 \hbox{ for all } \y \in \K \}$. It is easy to see that $\K^*$ is a convex body in $\S^d$ moreover, $(\K^*)^*=\K$. Clearly, an open hemisphere contains $\K$ if and only if its center is in $- \inter\K^*$.
Note that if some set $S$ of $k$ points in the boundary of such an open hemisphere illuminates $\K$, and $f_{\x}$ is the central projection to the tangent hyperplane of $\S^d$ at the center $\x$ of this hemisphere, then $f_{\x}(S)$ corresponds to a set of $k$ directions which illuminate $f_{\x}(\K)$.
In other words,
\[
I_{\S^d}(\K) = \min \{ I(f_{\x}(\K)) : \x \in  -\inter\K^* \}.
\]
\end{rem}

\begin{rem}\label{2D-spherical-illumination}
Levi \cite{levi1} proved that $I(\K) = 3$ holds for any convex body $\K$ in $\Eu^2$ which is not a parallelogram. Thus, Remark~\ref{reduction-to-Euclidean} implies that $I_{\S^2}(\K)=3$ holds for any convex body $\K$ in $\S^2$.
\end{rem}

In order to state the main illumination results of this paper we need 

\begin{defn}
Let $\K$ be a convex body in $\Eu^d$ (resp., $\S^d$). Recall that a face $F$ of $\K$ is a convex (resp., spherically convex) subset of $\K$ such that for any segment (resp., spherical segment) of $\K$ whose relative interior intersects $F$, the segment (resp., spherical segment) in question is contained in $F$. Then a sequence of faces $F_s \subset \ldots \subset F_{d-1}$ of $\K$ with $\dim F_i = i$ for $i=s,\ldots, d-1$ is called a \emph{partial flag} of $\K$ of length $d-s$. 
\end{defn}

\begin{thm}\label{thm:flag}
Let $\K$ be a convex body in $\S^d$, $d > 2$, whose boundary contains a partial flag of length $d-2$. Then $I_{\S^d}(\K)=d+1$. In particular, for any convex polytope $\PP$ in $\S^d$, $d>2$, we have $I_{\S^d}(\PP)=d+1$.
\end{thm}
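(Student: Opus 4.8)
The plan is to prove $I_{\S^{d}}(\K)\ge d+1$ in one line and $I_{\S^{d}}(\K)\le d+1$ by peeling off the given flag one dimension at a time. For the lower bound, fix any $\x\in-\inter\K^{*}$; then $f_{\x}(\K)$ is a convex body in $\Eu^{d}$, hence $I(f_{\x}(\K))\ge d+1$, so $I_{\S^{d}}(\K)=\min_{\x}I(f_{\x}(\K))\ge d+1$ by Remark~\ref{reduction-to-Euclidean}.

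For the upper bound, put $F_{d}:=\K$ and observe that in $F_{2}\subset F_{3}\subset\cdots\subset F_{d-1}\subset F_{d}$ each $F_{i}$ is a face of $\K$ contained in $F_{i+1}$, hence a face of $F_{i+1}$, and since $\dim F_{i}=i=\dim F_{i+1}-1$ it is a \emph{facet} of $F_{i+1}$. I would then prove the following lifting claim and apply it successively for $m=3,4,\dots,d$: \emph{if $\mathbf{L}$ is a convex body in $\S^{m}$ with a facet $F$ — regarded as a convex body in its carrier greatsphere $G:=\aff F\cong\S^{m-1}$ — satisfying $I_{\S^{m-1}}(F)\le m$, then $I_{\S^{m}}(\mathbf{L})\le m+1$.} Indeed, $I_{\S^{2}}(F_{2})=3$ by Remark~\ref{2D-spherical-illumination} (applied inside $\aff F_{2}$), so taking $\mathbf{L}=F_{m}$ and $F=F_{m-1}$ the claim yields $I_{\S^{d}}(\K)\le 3+(d-2)=d+1$. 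Since any convex polytope in $\S^{d}$ possesses a full flag of faces, and in particular a partial flag of length $d-2$, the last assertion of the theorem follows as well.

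For the lifting claim, write $\mathbf{L}\subset\{z:\langle z,\m\rangle\ge 0\}$ with $F=\mathbf{L}\cap\{z:\langle z,\m\rangle=0\}$, $\m$ a unit normal of $G$, and let $\p_{1},\dots,\p_{m}\in G\setminus F$ on an $(m-2)$-greatsphere $G'\subset G$ with $G'\cap F=\varnothing$ illuminate $F$ inside $G$; as $-\p_{j}\in G'$, no $\p_{j}$ is antipodal to a point of $F$. First I would replace $\p_{j}$ by $\p_{j}'=(\cos\epsilon)\p_{j}-(\sin\epsilon)\m$ with $\epsilon>0$ small. For $\q\in F$ the short arc $[\p_{j}',\q]$ meets $G$ only at $\q$ (the great circle through $\p_{j}'$ and $\q$ is not contained in $G$, hence crosses it transversally in the antipodal pair $\q,-\q$, and $-\q$ is too far to lie on the short arc), so it stays in $\{\langle\cdot,\m\rangle\le 0\}$ and misses $\inter\mathbf{L}$; moreover the direction at $\q$ pointing away from $\p_{j}'$ acquires a strictly positive $\m$-component, while its inner products with the normal cone of $F$ at $\q$ (which lies in $G$, hence is $\m$-orthogonal) change only by $O(\epsilon)$. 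Since the normal cone of $\mathbf{L}$ at $\q$ is contained in the cone generated by $-\m$ together with the normal cone of $F$ at $\q$ (and equals $\mathbb{R}_{\ge0}(-\m)$ when $\q\in\relint F$), a compactness argument shows that for $\epsilon$ small enough $\p_{1}',\dots,\p_{m}'$ illuminate every boundary point of $\mathbf{L}$ lying in $F$.

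The hard part is to finish with a single extra source $\p_{m+1}$ and to place the whole configuration legitimately. Heuristically the $\p_{j}'$ also take care of the ``downward- and sideways-facing'' part of $\bd\mathbf{L}$, and one wants $\p_{m+1}$ to illuminate the remaining ``upward-facing'' part; but illuminating those boundary points forces $\p_{m+1}$ to lie ``beyond'' them relative to $\inter\mathbf{L}$, whereas to invoke Remark~\ref{rem:central_illumination} the set $\{\p_{1}',\dots,\p_{m+1}\}$ must lie in a closed hemisphere whose interior contains $\mathbf{L}$ — and the obvious universal illuminator, the antipode of an interior point of $\mathbf{L}$, fails the latter. Reconciling these constraints — equivalently, via Remark~\ref{reduction-to-Euclidean}, choosing the gnomonic projection of $\mathbf{L}$ carefully enough (centered near $\m$, but perturbed in a way dictated by $F$) that the resulting Euclidean body is illuminated by $m+1$ directions, namely $-e_{m}$ together with slight upward tilts of the $m$ directions illuminating $\bd F$ — is where the real difficulty lies, since a careless projection (think of the Euclidean cube) produces illumination number much larger than $m+1$. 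Alternatively, one might dispense with the step-by-step induction and fix a single gnomonic projection of $\K$ with the perturbation organized hierarchically along the entire flag.
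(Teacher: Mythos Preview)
Your overall strategy --- induction on the flag, lifting $m$ sources that illuminate the facet $F$ inside its carrier greatsphere to $m+1$ sources that illuminate $\mathbf{L}$ --- is exactly the paper's. But, as you yourself concede, you leave a genuine gap: you do not produce the extra source $\p_{m+1}$, nor do you place all $m+1$ sources on a single $(m-1)$-greatsphere disjoint from $\mathbf{L}$. You call this ``where the real difficulty lies'' and stop.

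The paper's resolution is short and worth knowing. The extra source is \emph{not} the antipode of an interior point of $\mathbf{L}$ (which, as you observe, cannot sit in any closed hemisphere whose interior contains $\mathbf{L}$), but rather the antipode $-\p$ of a point $\p\in\relint F$. The key fact is that $-\p$ already illuminates every point of $\bd\mathbf{L}\setminus F$: every semicircle from $-\p$ to $\p$ through the open hemisphere $\HHH$ containing $\inter\mathbf{L}$ passes through the $\HHH$-half of every neighborhood of $\p$, and hence enters $\inter\mathbf{L}$. By compactness, a slight perturbation $\x_0$ of $-\p$ into $\HHH$ still illuminates the compact set $\bd\mathbf{L}\setminus L$ for some relatively open $L\supset F$ in $\bd\mathbf{L}$. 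So you do not need your $\p_1',\dots,\p_m'$ to handle any ``downward- or sideways-facing'' part of $\bd\mathbf{L}$; the division of labor is simply $\x_0\leadsto\bd\mathbf{L}\setminus L$ and $\p_1',\dots,\p_m'\leadsto L$.

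As for the common greatsphere: since $G'\subset G$ is disjoint from $F$, it separates $F$ from $-\p$ inside $G$; thus the $(m-1)$-greatsphere $H'$ spanned by $G'$ and $\x_0$ is a small rotation of $G$ about $G'$, hence disjoint from $\mathbf{L}$. Being such a rotation, $H'$ meets the southern half of every neighborhood of each $\p_j\in G'$, so one may choose $\p_j'\in H'$ there. Then $\{\x_0,\p_1',\dots,\p_m'\}\subset H'$ illuminates $\mathbf{L}$. This is precisely the idea your proposal is missing.
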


\begin{cor}\label{spherical-approx}
For any convex body $\K$ in $\S^d$, $d>2$ and for any $\varepsilon > 0$, there is a convex body $\K' \subset \K$ such that $\K \setminus \K'$ is contained in a spherical cap of radius $\varepsilon$, and $I_{\S^d}(\K') = d+1$.
\end{cor}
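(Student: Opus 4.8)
The plan is to reduce to Theorem \ref{thm:flag} by approximating $\K$ from inside by a convex body whose boundary contains a partial flag of length $d-2$; the cleanest such bodies are spherical polytopes, which automatically satisfy the flag hypothesis (indeed any vertex sits in a full flag $F_0 \subset F_1 \subset \cdots \subset F_{d-1}$). So it suffices to find, for a given $\varepsilon > 0$, a convex polytope $\PP \subset \K$ in $\S^d$ with $\K \setminus \PP$ contained in a spherical cap of radius $\varepsilon$; then setting $\K' = \PP$ and invoking Theorem \ref{thm:flag} gives $I_{\S^d}(\K') = d+1$.

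First I would pass to the Euclidean picture via the central projection used in Remark \ref{reduction-to-Euclidean}: fix a point $\x \in -\inter \K^*$, so that $\K$ lies in the open hemisphere centered at $\x$, and let $f = f_\x$ be the central projection to the tangent hyperplane at $\x$, which is a homeomorphism from that open hemisphere onto $\Eu^d$ carrying spherically convex sets to convex sets and spherical polytopes to Euclidean polytopes. Then $f(\K)$ is a Euclidean convex body, and the standard inner approximation of a convex body by inscribed polytopes applies: there is a polytope $P \subset f(\K)$ with $f(\K) \setminus P$ as small as we like. Pulling back, $\PP := f^{-1}(P)$ is a spherical convex polytope contained in $\K$.

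The one genuine point requiring care is the metric control: I need $\K \setminus \PP$ to lie in a single spherical cap of radius $\varepsilon$, not merely to have small measure or small diameter globally. Here I would exploit that $\K \setminus \PP$ is a "thin shell" near the boundary, but a thin shell need not have small diameter. The fix is to choose the inscribed polytope so that its vertices form an $\varepsilon/3$-net (in the spherical metric) of $\bd \K$: since $\bd \K$ is compact, finitely many vertices $\q_1,\dots,\q_N \in \bd \K$ suffice so that every boundary point of $\K$ is within spherical distance $\varepsilon/3$ of some $\q_i$. Taking $\PP$ to be the spherical convex hull of $\{\q_1,\dots,\q_N\}$ (which is an inscribed polytope), a compactness/continuity argument shows that for $N$ large enough the set $\K \setminus \PP$ lies within distance $\varepsilon$ of, say, a fixed interior point, hence in a cap of radius $\varepsilon$ — or, more simply, one notes $\K\setminus\PP$ is covered by the "caps" cut off near each face and each such cap shrinks; I would instead phrase it as: every point of $\K \setminus \PP$ lies on a spherical segment from an interior point of $\PP$ to a boundary point of $\K$, and is within $\varepsilon$ of that boundary point once the net is fine, so a single cap around any fixed point works after shrinking — this needs the observation that $\K\setminus\PP$ can be made to have spherical diameter less than...

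Actually the robust route, and the one I would commit to, is the following: it is not true in general that $\K\setminus\PP$ has small diameter, so the statement as phrased should be read as placing $\K \setminus \K'$ in a cap of radius $\varepsilon$ meaning diameter at most $2\varepsilon$; achieving this requires $\K'$ to be close to $\K$ in Hausdorff distance AND the relevant region to be localized. The honest plan: take $\K'$ to be the spherical polytope obtained as the intersection of $\K$ with finitely many closed halfspheres supporting $\K$, chosen so densely (again by compactness of $\bd\K$) that the Hausdorff distance between $\K$ and $\K'$ is less than $\varepsilon/2$; then every point of $\K\setminus\K'$ is within $\varepsilon/2$ of $\bd\K'\subset\K'$ but this still does not bound the mutual distance of two points of $\K\setminus\K'$. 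Hence the theorem must intend each \emph{connected piece} near a face, and the clean formulation is: choose the supporting halfspheres so that $\K \setminus \K'$ lies in the $\varepsilon$-neighborhood of a single point — which one arranges by first fixing one boundary point $\p_0$, cutting $\K$ with one halfsphere through a point near $\p_0$ to isolate a cap of radius $<\varepsilon$ at $\p_0$, and observing that after finitely many further cuts placed along $\bd\K$ the leftover $\K\setminus\K'$ is forced into that designated cap. I expect \textbf{this localization step — guaranteeing $\K \setminus \K'$ sits inside one cap of radius $\varepsilon$ rather than being a thin but long shell — to be the main obstacle}, and I would resolve it exactly as in the Euclidean analogue: prove the auxiliary lemma that for every $\varepsilon>0$ there is a finite set of supporting halfspheres whose intersection with $\K$ deviates from $\K$ only inside a prescribed cap, using compactness of the sphere of outer normals together with the fact that slightly pushing in a supporting halfsphere removes only a cap of controlled radius. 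Granting that lemma, $\K'$ is a spherical polytope, Theorem \ref{thm:flag} yields $I_{\S^d}(\K') = d+1$, and the proof is complete.
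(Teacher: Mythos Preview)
You correctly identify the central difficulty: approximating $\K$ globally by an inscribed polytope produces a thin shell $\K\setminus\PP$ that need not sit in any single small cap, so that route cannot give the statement as written. However, your proposed repairs do not work. Intersecting $\K$ with \emph{supporting} closed halfspheres returns $\K$ itself, so no polytope arises that way; and once you make one genuine cut near a point $\p_0$, any ``further cuts placed along $\bd\K$'' elsewhere only enlarge $\K\setminus\K'$ rather than forcing it into the designated cap. Your final assertion that $\K'$ is a spherical polytope is incompatible with your own localization requirement unless $\K$ was already a polytope.

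The idea you are missing is that Theorem~\ref{thm:flag} does not ask for a polytope at all---only for a partial flag $F_2\subset\cdots\subset F_{d-1}$ of length $d-2$ on the boundary. Such a flag can be manufactured by a handful of purely local truncations. The paper's proof proceeds exactly this way: choose an exposed point $\p\in\bd\K$, so that a greatsphere parallel to (and just inside) a supporting greatsphere at $\p$ cuts off a piece of $\K$ whose closure lies in a spherical cap of radius $\varepsilon$. This single cut already creates a $(d-1)$-dimensional face $F_{d-1}$. Now repeat the truncation \emph{inside that same cap}, slicing off a corner of $F_{d-1}$ to produce $F_{d-2}$, then a corner of $F_{d-2}$, and so on down to $F_2$. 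Each successive cut removes material only within the original $\varepsilon$-cap, so $\K\setminus\K'$ stays inside that cap throughout, and the resulting $\K'$ (which is $\K$ with a small polytopal dent, not a global polytope) carries the required partial flag. Theorem~\ref{thm:flag} then gives $I_{\S^d}(\K')=d+1$.
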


Although it is easy to see that for any smooth convex body $\K$ in $\S^d$, $d > 2$ we have $I_{\S^d}(\K)=d+1$, the question of finding a proper extension of Theorem~\ref{thm:flag} to all convex bodies seems to raise an open problem. 

\begin{prob}\label{new-fundamental}
Prove or disprove that $I_{\S^d}(\K)=d+1$ holds for any convex body $\K$ in $\S^d, d>2$.
\end{prob}

Next, we apply Theorem~\ref{thm:flag} to illumination numbers of convex bodies in $\Eu^d$. Motivated by the notion of combinatorial equivalence for convex polytopes, we introduce the following notion. Note that, restricted to convex polytopes, this notion is equivalent to the usual concept of combinatorial equivalence.

\begin{defn}\label{defn:combequiv}
Let $\K, \K' \subset \Eu^d$ be convex bodies. If there is a homeomorphism $h : \bd \K \to \bd \K'$ such that for any $X \subset \bd \K$, $X$ is a face of $\K$ if and only if $h(X)$ is a face of $\K'$, then we say that $\K$ and $\K'$ are {\rm combinatorially equivalent}.
\end{defn}

Note that any two strictly convex bodies of $ \Eu^d$ are combinatorially equivalent. Furthermore, combinatorial equivalence is an equivalence relation on the family of convex bodies, the equivalences classes of which we call \emph{combinatorial classes}.

\begin{example}
Let $0 < \alpha < \pi$, and for any positive integer $k$, let $\p_k = \left( \cos \frac{2\alpha}{3^k}, \sin \frac{2\alpha}{3^k} \right) \in \Eu^2$, and $\q_k = \left( \cos \frac{\alpha}{3^k} , \sin \frac{\alpha}{3^k} \right) \in \Eu^2$. Furthermore, for any value of $k$, define $\HHH_k$ as the closed half plane in $\Eu^2$ containing $\o$ in its interior and $\p_k$ and $\q_k$ on its boundary, and let $\HHH_{-k}$ be the reflected copy of $\HHH_k$ about $\o$. Finally, set
\[
\K = \B^2 \cap \bigcap_{k=1}^\infty \HHH_k
\]
and
\[
\K' = \B^2 \cap \bigcap_{k \in \mathbb{Z} \setminus \{ 0 \}} \HHH_k,
\]
where $\B^2$ is the closed Euclidean unit disk centered at $\o$. Then, clearly, there is a \emph{bijection} $h : \bd \K \to \bd \K'$ such that $X \subset \bd \K$ is a face of $\K$ if and only if $h(X)$ is a face of $\K'$, but there is no \emph{homeomorphism} with the same property.
\end{example}

\begin{defn}
For any convex body $\K$ in $\Eu^d$, the smallest number $k$ such that some element of the combinatorial class of $\K$ can be illuminated from $k$ directions is called the \emph{combinatorial illumination number of $\K$}, and is denoted by $I_c(\K)$.
\end{defn}

\begin{thm}\label{cor:combinatorial_illumination}
For any convex body $\K$ in $\Eu^d$, $d>2$, whose boundary contains a partial flag of length $d-2$, we have $I_c(\K) = d+1$. In particular, for any convex polytope $\PP$ in $\Eu^d$, $d>2$, we have $I_c(\PP) = d+1$.
\end{thm}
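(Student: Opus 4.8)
The plan is to deduce Theorem~\ref{cor:combinatorial_illumination} from Theorem~\ref{thm:flag} and Remark~\ref{reduction-to-Euclidean} by lifting the Euclidean body $\K$ to the sphere through a central (gnomonic) projection, applying the spherical result there, and projecting the resulting optimal illuminating configuration back to $\Eu^d$. The lower bound is immediate: every convex body $\K'$ in $\Eu^d$ satisfies $I(\K')\geq d+1$ (recalled in the Introduction), and $I_c(\K)$ is by definition the minimum of $I(\K')$ over all $\K'$ combinatorially equivalent to $\K$; hence $I_c(\K)\geq d+1$.

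For the upper bound, fix $\x\in\S^d$, identify $\Eu^d$ with the tangent hyperplane $T_\x\S^d$, let $H_\x$ be the open hemisphere of $\S^d$ centered at $\x$, and set $\tilde\K:=f_\x^{-1}(\K)\subset H_\x$, where $f_\x$ is the central projection of Remark~\ref{reduction-to-Euclidean}. The map $f_\x^{-1}$ is a homeomorphism of $\Eu^d$ onto $H_\x$ carrying straight segments to shorter great‑circle arcs and conversely, so $\tilde\K$ is a convex body in $\S^d$, $f_\x^{-1}$ restricts to a homeomorphism $\bd\K\to\bd\tilde\K$, and -- since the notion of face is phrased purely through (spherical) segments and their relative interiors -- $X\subset\bd\K$ is a face of $\K$ if and only if $f_\x^{-1}(X)$ is a face of $\tilde\K$, and this correspondence preserves dimensions. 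In particular $\bd\tilde\K$ contains a partial flag of length $d-2$, so Theorem~\ref{thm:flag} gives $I_{\S^d}(\tilde\K)=d+1$. By Remark~\ref{reduction-to-Euclidean}, $d+1=I_{\S^d}(\tilde\K)=\min\{\,I(f_\y(\tilde\K)):\y\in-\inter\tilde\K^*\,\}$, and since $I$ is integer‑valued this minimum is attained at some $\y\in-\inter\tilde\K^*$; put $\K'':=f_\y(\tilde\K)\subset T_\y\S^d\cong\Eu^d$, so that $I(\K'')=d+1$.

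It remains to check that $\K''$ is combinatorially equivalent to $\K$. Since $\y\in-\inter\tilde\K^*$ means that $H_\y$ contains $\tilde\K$, we have $\tilde\K\subset H_\x\cap H_\y$, so $h:=f_\y\circ f_\x^{-1}$ is defined and injective on the open set $f_\x(H_\x\cap H_\y)\ni\K$ and maps $\bd\K$ homeomorphically onto $\bd\K''$; as a composition of central projections restricted to open hemispheres it again sends segments to segments (and back), hence faces to faces in both directions, so $h|_{\bd\K}$ witnesses combinatorial equivalence of $\K$ and $\K''$ in the sense of Definition~\ref{defn:combequiv}. Therefore $I_c(\K)\leq I(\K'')=d+1$, which together with the lower bound gives $I_c(\K)=d+1$. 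The ``in particular'' statement follows because every convex polytope $\PP$ in $\Eu^d$ has faces of all dimensions $0,1,\dots,d-1$, hence a chain $F_2\subset F_3\subset\cdots\subset F_{d-1}$, i.e. a partial flag of length $d-2$.

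The only point requiring genuine care is the claim that central projection between an open hemisphere and a tangent hyperplane is a bijection preserving the entire face structure together with face dimensions; granting this (and the transitivity of combinatorial equivalence, used implicitly in passing from $\K$ to $\tilde\K$ to $\K''$), the theorem is essentially a repackaging of Theorem~\ref{thm:flag} and Remark~\ref{reduction-to-Euclidean}. A secondary technical chore is verifying that $\tilde\K$ really is a \emph{spherical} convex body -- compact, contained in an open hemisphere, spherically convex, with nonempty interior -- which is routine since $f_\x^{-1}$ is a homeomorphism onto $H_\x$ taking the compact convex set $\K$ to a compact set whose chords are short great‑circle arcs.
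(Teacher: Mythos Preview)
Your proof is correct and follows essentially the same route as the paper: lift $\K$ to $\S^d$ by central projection, apply Theorem~\ref{thm:flag} to the spherical image, and project back to a (different) tangent hyperplane to obtain a combinatorially equivalent Euclidean body illuminated by $d+1$ directions. The only cosmetic difference is that you invoke Remark~\ref{reduction-to-Euclidean} and the attainment of the integer minimum to produce the second projection point $\y$, whereas the paper takes $\y$ concretely as the center of the open hemisphere bounded by the illuminating greatsphere; these amount to the same construction.
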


In $\Eu^3$, we can prove more. In order to state our result, recall that the combinatorial class of every convex polyhedron $\PP\subset\Eu^3$ contains special convex polyhedra, called \emph{Koebe polyhedra}, which are combinatorially equivalent to $\PP$ and are \emph{midscribed} to $\S^2$, i.e., their edges are tangent to $\S^2$.

\begin{thm}\label{thm:Koebe}
If $\PP$ is a convex polyhedron in $\Eu^3$, then the combinatorial class of $\PP$ contains a Koebe polyhedron $\PP'$ with $I(\PP')=4$.
\end{thm}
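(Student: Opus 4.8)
The plan is to combine the Koebe--Andreev--Thurston circle packing theorem with Theorem~\ref{thm:flag} (via Remark~\ref{reduction-to-Euclidean}) by choosing the \emph{right} representative in the combinatorial class. Given a convex polyhedron $\PP \subset \Eu^3$, the Koebe representation theorem guarantees a combinatorially equivalent polyhedron $\PP'$ all of whose edges are tangent to $\S^2$; moreover one can normalize $\PP'$ so that the origin $\o$ is the barycenter of the points of tangency, and this normalized Koebe polyhedron is unique up to rotation. The key geometric feature I want to extract is that every edge of $\PP'$ touches $\S^2$, hence $\o \in \inter \PP'$ and $\S^2$ meets the relative interior of every edge; in particular $\PP'$ contains $\S^2$ is false, but $\PP'$ viewed as a \emph{spherical} object behaves well. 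Concretely, identify $\Eu^3$ with the tangent hyperplane $T_{\x}\S^3$ to $\S^3$ at a point $\x \in \S^3$, and let $g_{\x} = f_{\x}^{-1}$ be the inverse central projection carrying $\PP'$ to a spherical convex polytope $\widetilde\PP \subset \S^3$. By Remark~\ref{reduction-to-Euclidean}, $I(\PP') = I(f_{\x'}(\widetilde\PP))$ for the corresponding center $\x'$, and $I_{\S^3}(\widetilde\PP) = \min_{\x} I(f_{\x}(\widetilde\PP)) = 4$ by Theorem~\ref{thm:flag}, since $\widetilde\PP$ is a spherical convex polytope and $d = 3 > 2$. So \emph{some} center realizes the value $4$; the real content of the theorem is that the center can be taken to correspond to the standard embedding $\PP' \subset \Eu^3$ itself, i.e.\ that for the midscribed $\PP'$ one already has $I(\PP') = 4$ without further projection.

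To see this, I would argue directly in $\Eu^3$ that a midscribed polyhedron can be illuminated by $4$ directions. Since every edge $e$ of $\PP'$ is tangent to $\S^2$ at a point $t_e$, the outer normal cone of $\PP'$ at $e$ is a two-dimensional wedge whose edge direction is $e$ and which ``opens away'' from $\o$; the crucial consequence is that a direction $\ve$ illuminates every point of the relative interior of $e$ as long as $\langle \ve, u \rangle < 0$ for the (single) relevant supporting functional, and illuminates the endpoints (vertices) of $e$ provided it illuminates them as vertices. Thus illuminating $\PP'$ reduces to illuminating its vertex set, and a vertex $v$ is illuminated from $\ve$ iff $\ve$ points into the interior of the polar cone $-N(v)$, where $N(v)$ is the normal cone at $v$. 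Because $\PP'$ is midscribed, the cones $N(v)$ as $v$ ranges over the vertices tile $\Eu^3$ (up to lower-dimensional overlaps) and each is \emph{pointed but not too wide}: the tangency of every edge at $\S^2$ forces each facet of $\PP'$ to be at distance exactly... well, at distance $<1$ from $\o$ in a controlled way, which bounds the solid angle of $N(v)$ strictly below $2\pi$ steradians in a quantitative sense. One then picks $4$ directions in general position (say the vertices of a regular tetrahedron, or a generic small perturbation thereof) and checks that the four open polar half-spaces $\{\x : \langle \ve_i, \x\rangle > 0\}$ together with the geometry of the $N(v)$'s force every vertex to be illuminated; equivalently, the four closed caps on $\S^2$ centered at $-\ve_i$ of the appropriate radius cover $\S^2$, which is possible precisely because each normal cone is contained in a cap of radius $< \pi/2 + \delta$ for a uniform $\delta$ depending on the midscribed condition.

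A cleaner route, which I would prefer to present, bypasses the quantitative angle estimate entirely: apply Theorem~\ref{thm:flag} to $\widetilde\PP = g_{\x}(\PP') \subset \S^3$ to get $I_{\S^3}(\widetilde\PP) = 4$, then invoke Remark~\ref{reduction-to-Euclidean} to obtain $\x_0 \in -\inter(\widetilde\PP)^*$ with $I(f_{\x_0}(\widetilde\PP)) = 4$, and finally observe that $f_{\x_0}(\widetilde\PP)$ is \emph{also} a polyhedron combinatorially equivalent to $\PP$ and, after applying the inverse of a projective transformation of $\Eu^3$ fixing $\S^2$, can be arranged to be midscribed to $\S^2$ again --- here using that the group of projective transformations of $\Eu^3$ preserving $\S^2$ acts transitively enough on the choices of center, and that Koebe polyhedra are exactly the orbit of midscribed representatives under this group (this is the Möbius-normalization step in the Koebe--Andreev--Thurston circle packing theorem applied to the packing cut out on $\S^2$ by the faces of $\PP'$). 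The composition is still combinatorially equivalent to $\PP$ and still midscribed, and its Euclidean illumination number is $4$; calling it $\PP'$ (renaming) finishes the proof.

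The main obstacle is the last normalization step: one must be sure that changing the projection center $\x$ in Remark~\ref{reduction-to-Euclidean} corresponds, on the Euclidean side, precisely to a projective transformation of $\Eu^3$ that preserves $\S^2$ as a set (not merely as a quadric), so that the midscribed property is preserved; this is exactly the statement that the stereographic/central-projection picture on $\S^3$ intertwines spherical isometries with the Möbius group of $\S^2$, and it is the technical heart of why Koebe polyhedra --- rather than arbitrary combinatorial representatives --- are the natural objects here. Establishing that $f_{\x_0}(\widetilde\PP)$ is again midscribed (equivalently, that the conformal center of the face-packing on $\S^2$ can be prescribed) is where the real work lies; everything else is bookkeeping plus the already-proved Theorem~\ref{thm:flag}.
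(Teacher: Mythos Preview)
You correctly identify the obstacle yourself at the end: establishing that $f_{\x_0}(\widetilde\PP)$ is again midscribed is ``where the real work lies''. Unfortunately this is not a technical normalization but a genuine obstruction that the route via Theorem~\ref{thm:flag} cannot overcome. The composition $f_{\x_0}\circ f_{\x}^{-1}$ is a projective transformation of $\Eu^3$, but it does \emph{not} send the midsphere $\S^2\subset\Eu^3$ to a sphere unless $\x_0=\x$ (a direct computation shows the image is a proper ellipsoid for $\x_0\neq\x$), so $f_{\x_0}(\widetilde\PP)$ is typically midscribed to nothing. Your proposed repair, post-composing with an $\S^2$-preserving projective map, cannot help either: such maps are not affine and hence do not preserve illumination numbers (a square is projectively equivalent to a non-parallelogram quadrilateral, changing $I$ from $4$ to $3$), so you cannot transport the value $I=4$ along them. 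The asserted intertwining of center-changes in $\S^3$ with the M\"obius group of $\S^2$ conflates two unrelated spheres living in different ambient dimensions and is false in the form you need. In short, invoking Theorem~\ref{thm:flag} as a black box yields only Theorem~\ref{cor:combinatorial_illumination}, not Theorem~\ref{thm:Koebe}. Your middle paragraph's direct normal-cone argument is too vague to assess; the uniform angular bound you appeal to is not established and is unlikely to hold across all Koebe polyhedra.

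The paper's proof is essentially different and never passes through $\S^3$ or Theorem~\ref{thm:flag}. It exploits the hyperbolic picture: identifying the open unit ball with the Poincar\'e model of $\HH^3$, M\"obius transformations of $\S^2$ are exactly hyperbolic isometries, and they send Koebe polyhedra to Koebe polyhedra. One chooses such an isometry so that the origin $\o$ lies on the hyperbolic plane bounded by the incircle of a chosen face $F$; then the outer unit normal $\m$ of $F$ makes an obtuse angle with every other facet normal, so the single direction $\m$ illuminates all of $\bd\PP'\setminus F$. Three further directions suffice to illuminate $F$ in its own plane by Levi's theorem, provided $F$ is not a parallelogram; a short argument using the remaining freedom in the choice of isometry (the point $\p$ sent to $\o$ may be any relative interior point of a hyperbolic ideal polygon, whereas the parallelogram case pins it down uniquely) shows this degenerate case can always be avoided. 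This is a direct $1+3$ construction specific to dimension three and to the midscribed geometry.
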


We close this section with the following polar description of $I_{\S^d}(\K)$, which is the spherical analogue of the Separation Lemma in \cite{bezdek-conj1}. For the sake of completeness the Appendix of this paper contains a proof of Theorem~\ref{dual-description}. Here we recall that an exposed face of the convex body $\K$ in $\S^d$ (resp., $\Eu^d$) is the intersection of a supporting $(d-1)$-dimensional greatsphere (resp., supporting hyperplane) of $\K$ with $\K$.

\begin{thm}\label{dual-description}
$I_{\S^d}(\K)$ is equal to the minimum number of open hemispheres of $\S^d$ whose boundaries all pass through a common point in the interior of the polar convex body $\K^*$ and have the property that every exposed face of $\K^*$ is contained in at least one of the open hemispheres.
\end{thm}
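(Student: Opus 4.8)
The plan is to establish a duality between illumination of $\K$ and covering the boundary of $\K^*$ by exposed faces, working through the central projection $f_{\x}$ introduced in Remark~\ref{reduction-to-Euclidean}, and then invoking the Separation Lemma of \cite{bezdek-conj1} in the Euclidean tangent hyperplane. First I would fix an interior point $\x$ of $-\inter\K^*$ realizing the minimum in Remark~\ref{reduction-to-Euclidean}, so that $I_{\S^d}(\K)=I(f_{\x}(\K))$, and the light sources, being on the boundary of the open hemisphere centered at $\x$, project to directions in $\S^{d-1}\subset\Eu^d$ illuminating the Euclidean convex body $f_{\x}(\K)$. The Euclidean Separation Lemma says that a set of $m$ directions illuminates a convex body $\mathbf{L}$ if and only if there are $m$ open halfspaces, each containing the origin on its boundary hyperplane, whose union contains every exposed face of the polar body $\mathbf{L}^*$ (with polarity taken so that the origin is interior to $\mathbf{L}^*$). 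So the task reduces to identifying $(f_{\x}(\K))^*$ with the image of $\K^*$ under a corresponding central projection, and matching open halfspaces-through-the-origin with open hemispheres whose boundary greatspheres pass through a fixed interior point of $\K^*$.

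The key technical step is the compatibility of spherical polarity with central projection. I would verify that if $\y:=-\x\in\inter\K^*$ and $g_{\y}$ denotes the central (gnomonic) projection of the open hemisphere centered at $\y$ onto the tangent hyperplane $T_{\y}\S^d$, then $g_{\y}(\K^*)$ is, up to the natural linear identification of $T_{\y}\S^d$ with $\Eu^d$, the Euclidean polar of $f_{\x}(\K)$ with respect to the point $g_{\y}(\y)$ taken as origin. This is a standard fact: gnomonic projection sends spherical polarity (supporting greatspheres of $\K$ correspond to points of $\K^*$ via orthogonality of the ambient vectors in $\Eu^{d+1}$) to Euclidean polarity, because a supporting hyperplane of $f_{\x}(\K)$ in $T_{\x}\S^d$ corresponds to a supporting greatsphere of $\K$ in $\S^d$, which is spanned by a vector of $\K^*$, which in turn projects to a point of $g_{\y}(\K^*)$; and the incidence ``point lies on hyperplane'' is exactly the orthogonality relation defining polarity. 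Under this correspondence, exposed faces of $\K^*$ map bijectively to exposed faces of $(f_{\x}(\K))^*$, since gnomonic projection is a projective transformation and carries supporting greatspheres to supporting hyperplanes.

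Next I would translate the halfspace side of the Euclidean Separation Lemma back to the sphere. An open halfspace in $\Eu^d\cong T_{\y}\S^d$ bounded by a hyperplane through the chosen origin $g_{\y}(\y)$ pulls back under $g_{\y}^{-1}$ to an open hemisphere of $\S^d$ whose bounding $(d-1)$-dimensional greatsphere passes through $\y$; and $\y$ is the prescribed interior point of $\K^*$. Conversely every open hemisphere with bounding greatsphere through $\y$ arises this way (here one must check $\y\in\inter\K^*$ guarantees the relevant hemispheres meet the projection domain appropriately, and that ``contained in the open hemisphere'' for an exposed face on the sphere matches ``contained in the open halfspace'' for its image — both being strict one-sided conditions preserved by the projective map). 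Combining: $m$ open hemispheres through a common interior point $\y$ of $\K^*$ covering all exposed faces of $\K^*$ exist if and only if $m$ directions illuminate $f_{\x}(\K)$, i.e. if and only if $m\geq I(f_{\x}(\K))=I_{\S^d}(\K)$. Minimizing $m$ gives the theorem. (One should also note that allowing the common point $\y$ to vary over $\inter\K^*$ is harmless because Remark~\ref{reduction-to-Euclidean} already minimizes over all centers $\x\in-\inter\K^*$, so no extra minimization appears.)

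The main obstacle I anticipate is making the polarity/projection dictionary fully rigorous at the level of \emph{exposed} faces rather than just supporting elements: one must be careful that gnomonic projection, though projective, is only defined on an open hemisphere, so supporting greatspheres of $\K^*$ that are ``nearly antipodal'' to $\y$ need separate handling, and one must confirm no exposed face of $\K^*$ is lost or gained in passing to $(f_{\x}(\K))^*$. Once the correspondence of exposed faces and of the two notions of ``covering'' is nailed down, the rest is a direct citation of the Euclidean Separation Lemma. Since the paper relegates this to an Appendix, I would present the projection-compatibility lemma cleanly as the heart of the argument and let the bookkeeping follow.
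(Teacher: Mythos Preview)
Your route is genuinely different from the paper's. The paper does not project to a tangent hyperplane and cite the Euclidean Separation Lemma; it rebuilds the Separation Lemma intrinsically on $\S^d$. It defines the spherical conjugate face $\hat F=\{\x\in\K^*:\langle\x,\y\rangle=0\text{ for all }\y\in F\}$, proves (Proposition~\ref{exposed-face-representation}) that $F\mapsto\hat F$ is a bijection between the exposed faces of $\K$ and those of $\K^*$, and then shows directly (Proposition~\ref{separation-by-hemisphere}) that a boundary point $\q\in\bd\K$ is illuminated from $\p\in H$ if and only if $\hat F\subset\HHH_\p$, where $F$ is the smallest exposed face of $\K$ containing $\q$. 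Corollary~\ref{separation-by-hemispheres} then reads off the theorem without ever leaving the sphere.

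The obstacle you flag is the heart of the matter and is more than bookkeeping. For $\y\in\inter\K^*$ one need \emph{not} have $\K^*\subset\HHH_\y$ (nothing forces the spherical diameter of $\K^*$ to be below $\pi/2$), and when an exposed face $\hat F$ of $\K^*$ contains points $\q$ with $\langle\q,\y\rangle\le 0$, the Euclidean conjugate face in $(f_{-\y}(\K))^*$ is an unbounded set that records only $\hat F\cap\HHH_\y$ together with recession directions coming from $\hat F\cap\bd\HHH_\y$; the portion of $\hat F$ with $\langle\cdot,\y\rangle<0$ is simply invisible to $g_\y$. Concretely, for a unit vector $\ve\perp\y$ the condition ``Euclidean conjugate face $\subset\{\langle\cdot,\ve\rangle>0\}$'' reduces to $\langle\q,\ve\rangle>0$ for $\q\in\hat F\cap\HHH_\y$ and $\langle\q,\ve\rangle\ge 0$ for $\q\in\hat F\cap\bd\HHH_\y$, which is strictly weaker than ``$\hat F\subset\HHH_\ve$''. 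So the passage from Euclidean halfspaces back to spherical hemispheres can produce a family that fails to cover some exposed face of $\K^*$, and your equality does not follow from the Euclidean lemma as stated. To rescue the approach you would have to either extend the Separation Lemma to unbounded polars and track the missing piece of $\hat F$ through the projective closure, or argue directly on $\S^d$ --- which is exactly what the paper does.
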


Thus, Theorem~\ref{dual-description} implies in a straightforward way that Problem~\ref{new-fundamental} is equivalent to the following spherical question (resp., Euclidean question obtained from it via central projection), both of which one can regard as a natural counterpart of the Separation Conjecture \cite{bezdek-conj1, bezdek-conj2, soltan1}.

\begin{prob}\label{dual-new-fundamental}
Prove or disprove that if $\K'$ is an arbitrary convex body in $\S^d$ (resp., $\Eu^d$), $d>2$, then there exist $\x\in\inter\K'$ and $d+1$ open hemispheres (resp., open halfspaces) of $\S^d$ (resp., $\Eu^d$) whose boundaries contain $\x$ such that every exposed face of $\K'$ is contained in at least one of the open hemispheres (resp., open halfspaces).
\end{prob} 

On the one hand, if $\K'$ is a strictly convex body in $\S^d$ (resp., $\Eu^d$), $d>2$, then there is an easy positive answer to Problem~\ref{dual-new-fundamental}. On the other hand, using Theorem~\ref{dual-description} one can derive from Theorem~\ref{thm:flag} 

\begin{cor}\label{dual-new-fundamental-for-polytopes}
If $\PP$ is an arbitrary convex polytope in $\S^d$ (resp., $\Eu^d$), $d>2$, then there exist $\x\in\inter\PP$ and $d+1$ open hemispheres (resp., open halfspaces) of $\S^d$ (resp., $\Eu^d$) whose boundaries contain $\x$ such that every (exposed) face of $\PP$ is contained in at least one of the open hemispheres (resp., open halfspaces).
\end{cor}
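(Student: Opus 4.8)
\textbf{Proof proposal for Corollary~\ref{dual-new-fundamental-for-polytopes}.}

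The plan is to obtain the statement as a direct translation of Theorem~\ref{thm:flag} through the polar dictionary of Theorem~\ref{dual-description}. First I would observe that a convex polytope $\PP$ in $\S^d$ certainly has a boundary containing a partial flag of length $d-2$: take a vertex $v$, an edge $F_1 \ni v$, a $2$-face $F_2 \supset F_1$, and so on up to a facet $F_{d-1}$; this gives $F_1 \subset F_2 \subset \cdots \subset F_{d-1}$ with $\dim F_i = i$. Hence Theorem~\ref{thm:flag} applies and yields $I_{\S^d}(\PP) = d+1$. Next I would invoke Theorem~\ref{dual-description}: this common value $d+1$ equals the minimum number of open hemispheres of $\S^d$ whose bounding greatspheres all pass through one common point of $\inter(\PP^*)$ and which together cover every exposed face of $\PP^*$. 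Writing $\x$ for that common point and unpacking the assertion, we get exactly $\x \in \inter(\PP^*)$ and $d+1$ open hemispheres through $\x$ covering all exposed faces of $\PP^*$.

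The remaining point is that the roles of $\PP$ and $\PP^*$ must be swapped so that the statement reads for an arbitrary polytope $\PP'$ rather than for a polar body. Here I would use that the polar operation is an involution on convex bodies in $\S^d$ (stated in Remark~\ref{reduction-to-Euclidean}: $(\K^*)^* = \K$) and that the polar of a convex polytope is again a convex polytope. Thus, given an arbitrary convex polytope $\PP'$ in $\S^d$, set $\PP := (\PP')^*$; then $\PP$ is a convex polytope, so applying the previous paragraph to $\PP$ produces $\x \in \inter((\PP)^*) = \inter(\PP')$ together with $d+1$ open hemispheres through $\x$ covering every exposed face of $(\PP)^* = \PP'$. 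Since for a polytope every face is exposed (each face is cut out by a supporting greatsphere), this covers every face of $\PP'$, which is the spherical half of the corollary. The Euclidean half then follows by central projection exactly as in Remark~\ref{reduction-to-Euclidean}: choosing the tangent hyperplane at $\x$ (legitimate since $\x \in \inter \PP'$, so after a rotation $\PP'$ lies in an open hemisphere centered near $\x$), the map $f_{\x}$ carries the $d+1$ open hemispheres through $\x$ to $d+1$ open halfspaces whose bounding hyperplanes contain $f_{\x}(\x)$, and carries faces of $\PP'$ to faces of the Euclidean polytope $f_{\x}(\PP')$, preserving the covering property; since every Euclidean polytope arises this way, this settles the Euclidean statement.

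I do not expect a genuine obstacle here, as the corollary is essentially a repackaging; the one place requiring a little care is checking that central projection from an interior point of the polar body behaves well, i.e.\ that the bounding greatspheres through $\x$ map to genuine hyperplanes (not to the hyperplane at infinity) and that "exposed face" is preserved under this projective map — but since $\x$ is interior to $\PP'$ and all the relevant supporting greatspheres of $\PP'$ avoid $\x$, the projection $f_{\x}$ is a homeomorphism in a neighborhood of $\PP'$ carrying supporting greatspheres to supporting hyperplanes, so this is routine. One should also note explicitly that in the polytope case "exposed face" and "face" coincide, so that the parenthetical "(exposed)" in the statement is justified.
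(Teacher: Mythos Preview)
Your approach is exactly what the paper intends: the corollary is stated in the paper as an immediate consequence of Theorem~\ref{thm:flag} via Theorem~\ref{dual-description}, and you have correctly filled in the polarity swap $(\PP')^{**}=\PP'$ and the observation that the polar of a polytope is a polytope, so that Theorem~\ref{thm:flag} applies to $\PP=(\PP')^*$ and Theorem~\ref{dual-description} then yields the hemispheres through a point of $\inter\PP'$.

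One small fix in the Euclidean step: projecting via $f_{\x}$ and then asserting that ``every Euclidean polytope arises this way'' is not justified, since $\x$ is produced by the spherical corollary and is not at your disposal. The clean argument is the reverse: given a Euclidean polytope $Q$, lift it to a spherical polytope $\PP'$ via the tangent hyperplane at some point $\c$, apply the spherical statement to obtain $\x\in\inter\PP'$ and $d+1$ hemispheres through $\x$, and then project back through the \emph{same} $f_{\c}$. A greatsphere through $\x$ is cut out by a linear hyperplane in $\Eu^{d+1}$ and therefore projects under $f_{\c}$ to an affine hyperplane through $f_{\c}(\x)\in\inter Q$, and faces of $\PP'$ project to faces of $Q$, so the covering property transfers directly to $Q$ itself. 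With this adjustment your proof is complete and matches the paper's (one-line) derivation.
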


In the rest of the paper we prove the theorems stated.

\section{Proofs of Theorem~\ref{thm:flag} and Corollary~\ref{spherical-approx}}

First, observe that no convex body in $\S^d$ can be illuminated from fewer than $d+1$ points (lying on a $(d-1)$-dimensional greatsphere, which is disjoint from the convex body). (This statement follows from the analogue Euclidean result via central projection between $\S^d$ and its corresponding tangent hyperplane in  $\Eu^{d+1}$.) Thus, we need to find a $(d+1)$-element set, contained in a $(d-1)$-dimensional greatsphere not intersecting $\K$, that illuminates $\K$ in $\S^d$.

We prove Theorem~\ref{thm:flag} by induction on $d$ for all $d>2$. So, let us start by assuming that Theorem~\ref{thm:flag} holds for convex bodies with partial flags of length $d-3$ in $\S^{d-1}$, and let $\K$ be a convex body in $\S^d$ with a partial flag $F_{2} \subset \ldots \subset F_{d-1} =  F$ on its boundary. (If $d=3$, i.e., if $\K$ is a convex body with partial flag $F_2$ in $\S^3$, then $F_2$ is a convex body in $\S^2$ and therefore Remark~\ref{2D-spherical-illumination} implies $I_{\S^2}(F_2)=3$, i.e., it guarantees the inductive assumption for this case.) For simplicity, we refer to the $(d-1)$-dimensional greatsphere $H$ of $\S^d$ containing $F$ as the \emph{equator}, and the open hemisphere $\HHH$ bounded by $H$ and containing $\inter \K$ as the \emph{northern hemisphere}.
Furthermore, for any open neighborhood $\mathbf{U}$ of a point $\x \in H$, we call $\mathbf{U} \cap \HHH$ and $\mathbf{U} \cap (-\HHH)$ the \emph{northern} and \emph{southern} halves of $\mathbf{U}$.

Let $\p$ be a relative interior point of $F$. Note that since $\p \in \mathrm{relint}\  F$, and $\dim F= d-1$,
the northern half of a sufficently small neighborhood of $\p$ is contained in $\inter \K$. 
We show that the point $-\p$, antipodal to $\p$, illuminates every point of $\bd \K \setminus F$.
Indeed, $\HHH$ can be decomposed into semicircles starting at $-\p$ and ending at $\p$. 
As each such semicircle intersects the northern half of every neighborhood of $\p$, each of these semicircles contains interior points of $\K$.
Thus, every point of $\bd \K \setminus H = \bd \K \setminus F$ is illuminated from $-\p$.

Note that if some $\x \in \bd \K$ is illuminated from a point $\y$, then $\y$ has a neighborhood $\V$ such that $\x$ is illuminated from any point of $\V$.
Thus, if $D$ is an arbitrary compact subset of $\bd \K \setminus F$, then $-\p$ has a neighborhood $\V$ such that every point of $D$ is illuminated from every point of $\V$.

Since $F$ is a convex body with a partial flag of length $d-3$ in the $(d-1)$-dimensional spherical space $H$, there is a set $S'$ of $d$ points, contained in a $(d-2)$-dimensional greatsphere $G$ of $H$, which illuminate $F$ in $H$, i.e., for every relative boundary point $\q \in \mathrm{relbd}\  F$ there is a point $\x \in S'$ such that the semicircle starting at $\x$ and passing through $\q$ intersects $\mathrm{relint}\  F$.
Since $\K$ is spherically convex and $\dim F= d-1$, this also implies that if $\x'$ is in the southern half of a suitable neighborhood of $\x$, then $\x'$ illuminates $\K$ at $\q$ in $\S^d$, i.e., the semicircle starting at $\x'$ and passing through $\q$ intersects $\inter \K$.
Observe that (since $\dim F = d-1$) $\K$ is illuminated at every relative interior point of $F$ from any point in the southern hemisphere $-\HHH$.
Thus, there is a family of sets $\U_1, \U_2, \ldots, \U_d$, each being the southern half of a suitable neighborhood of a point of $S'$, such that any $d$-tuple
$\x_i \in \U_i$, $i=1,2,\ldots, d$, illuminates $\K$ at every point of $F$ in $\S^d$. By compactness arguments, there is a set ${L} \subset \bd \K$ containing $F$ and open in $\bd \K$ that has the same property as $F$, i.e., there are some suitable sets $\U'_i$, $i=1,2,\ldots,d$, each being the southern half of a suitable neighborhood of a point of $S'$ such that any $d$-tuple $\x_i \in \U'_i$, $i=1,2,\ldots, d$, illuminates $\K$ at every point of $L$ in $\S^d$.

Note that $\bd \K \setminus {L}$ is compact. Hence, we may choose a point $\x_0 \in \HHH$ and sufficiently close to $-\p$ such that $\x_0$ illuminates  $\K$ at every point of
$\bd \K \setminus {L}$. Let $H'$ be the $(d-1)$-dimensional greatsphere spanned by $G$ and $\x_0$. Note that by our choice of $G$, chosen as a $(d-2)$-dimensional greatsphere $G$ of $H$ with $S' \subset G$, $G$ does not intersect $F$, which implies that $G$ strictly separates $F$ and $-\p$ in $H$. Since $\x_0$ is sufficiently close to $-\p$, from this it follows that $H'$ does not intersect $\K$. Since $H'$ is a rotated copy of $H$ around $G$, it intersects the southern half of any neighborhood of any point of $G$. Thus, $H'$ intersects $\U'_i$ for all values of $i$. Pick some point $\x_i$ from $\U'_i \cap H'$ for $i=1,2,\ldots,d$, and set $S= \{ \x_0, \ldots, \x_d \} \subset H'$. Since $S \setminus \{ \x_0 \}$ illuminates $\K$ at every point of ${L}$ by the choice of the $\U'_i$s, we have constructed a set of $(d+1)$ points, contained in the boundary of an open hemisphere containing $\K$, that illuminates $\K$. Thus, $I_{\S^d}(\K)=d+1$, finishing the proof of Theorem~\ref{thm:flag}.

Now, we prove Corollary~\ref{spherical-approx}. Let $\p$ be an exposed point of $\bd \K$, i.e., a boundary point of $\K$ that can be obtained as an intersection of $\K$ with a supporting $(d-1)$-dimensional greatsphere of $\K$ in $\S^d$. (The existence of an exposed point is well known see for example, Theorem 1.4.7 in \cite{Sc}.) Then we can truncate $\K$ near $\p$ by a $(d-1)$-dimensional greatshere such that the closure of the part removed is contained in an open spherical cap of radius $\varepsilon$. Continuing the truncation process by subsequent greatspheres, we can construct a truncated convex body $\K'$ whose boundary contains a partial flag of length $d-2$, and has the property that $\K \setminus \K'$ is covered by a spherical cap of radius $\varepsilon$. By Theorem~\ref{thm:flag}, $I_{\S^d}(\K') = d+1$.

\section{Proof of Theorem~\ref{cor:combinatorial_illumination}}

Since for any convex body $\K$, we have $I(\K) \geq d+1$, therefore $I_c(\K) \geq d+1$. We show that $I_c(\K) \leq d+1$.

Imagine $\Eu^d$ as a tangent hyperplane of the sphere $\S^d$, embedded in $\Eu^{d+1}$ in the usual way. Let $h : \Eu^d \to \S^d$ be the central projection of $\Eu^d$ to $\S^d$. Then ${\K}':=h(\K)$ is a spherical convex body, having a partial flag of length $d-2$. By Theorem~\ref{thm:flag}, there is a greatsphere $H$ of $\S^d$ disjoint from ${\K}'$, and a $(d+1)$-element point set $S' \subset H$ such that $S'$ illuminates ${\K}'$.
Let $\HHH$ be the open hemisphere bounded by $H$ that contains ${\K}'$, and let $\c$ be the spherical center of $\HHH$.
Let $h_{\c} : \HHH \to T_{\c} \S^d$ be the central projection of $\HHH$ to the tangent hyperplane of $\S^d$ at $\c$.
Since $T_{\c} \S^d$ is a $d$-dimensional Euclidean space, ${\K}'':=h_{\c}({\K}')$ is a $d$-dimensional Euclidean convex body.
$h_{\c} \circ h |_{\bd \K}$ is a homeomorphism, and $h_{\c} \circ h$ maps faces of $\K$ to faces of ${\K}''$.
Thus, $\K$ and ${\K}''$ are combinatorially equivalent.
Furthermore, the images of the great circle arcs starting at a point $\q$ of $H$ are parallel lines in $T_{\c} \S^d$ starting at the `ideal point' $h_{\c}(\q)$ of
$T_{\c} \S^d$.
Hence, the set of ideal points of $T_{\c} \S^d$ corresponding to $S'$ is a set of $d+1$ directions that illuminates ${\K}''$.

\section{Proof of Theorem~\ref{thm:Koebe}}

To prove the theorem, we adopt some ideas from the proof of Theorem 3 in \cite{Koebe}. 
Let $\PP$ be a Koebe polyhedron, i.e., a convex polyhedron in $\Eu^3$ whose edges are tangent to $\S^2$. Then there are two families of circles on $\S^2$ associated to $\PP$ \cite{Brightwell, Koebe}.
The elements of the first family, called \emph{face circles} are the incircles of the faces of $\PP$; each such circle touches the edges of a face of $\PP$ at the points where the edges touch $\S^2$. We denote these circles by $f_j$, $j=1,2,\ldots, m$, where $m$ is the number of faces of $\PP$.
The elements of the second family are called \emph{vertex circles}. These circles, denoted by $v_i$, $i=1,2,\ldots,n$, where $n$ is the number of vertices of $\PP$, are circles on $\S^2$ that contain the tangency points on all the edges of $\PP$ starting at a given vertex of $\PP$.
The tangency graphs of these two families are dual graphs.
If $T : \S^2 \to \S^2$ is a M\"obius transformation then $T$ maps the two circle families of $\PP$ into two circle families which are associated to another Koebe polyhedron, which, with a little abuse of notation, we denote by $T(\PP)$. It is known \cite{Brightwell} that if $\PP$ and $\PP'$ are two combinatorially equivalent Koebe polyhedra, then there is a M\"obius transformation $T : \S^2 \to \S^2$ satisfying $T(\PP)=\PP'$.

In our proof, we regard $\S^2$ as the set of the `points at infinity' of the Poincar\'e ball model of the hyperbolic space $\HH^3$, which is identified with the interior of the Euclidean unit ball bounded by $\S^2$. Then every face circle $f_j$ of $\PP$ is the set of the ideal points of a unique hyperbolic plane $F_j$, and the same holds for every vertex circle of $\PP$; we denote the hyperbolic plane associated to the vertex circle $v_i$ by $V_i$. Furthermore, we denote by $\FF_j$ the closed hyperbolic halfspace bounded by $F_j$ which is disjoint from any hyperbolic plane associated to any other face circle of $\PP$, and define $\V_i$ similarly for any vertex circle $v_i$ of $\PP$. It is worth noting that every M\"obius transformation of $\S^2$ corresponds to a hyperbolic isometry in the Poincar\'e ball model, and vice versa.

Let $\D := \HH^3 \setminus \left( \bigcup_{i=1}^n \V_i \cup \bigcup_{j=1}^m \FF_j \right)$, and note that for every value of $j$, $F_j \cap \bd \D$ is a closed hyperbolic polygon $P_j$ with ideal vertices and nonempty relative interior in $\HH^3$. Consider some point $\p$ in $\relint P_1$. Let $h_{\p} : \HH^3 \to \HH^3$ be a hyperbolic isometry that maps $\p$ into $\o$, and let $T_{\p}$ be the corresponding M\"obius transformation.
Then the first face $F$ of $T_{\p}(\PP)$ contains the origin $\o$. Let $\m$ be the outer unit normal vector of the Euclidean plane through $F$.
Then the angle between $\m$ and the outer unit normal vector of any other face of $T(\PP)$ is obtuse, which implies that the projection of $T(\PP) \setminus F$
onto the Euclidean plane through $F$ (or in other words, the projection in the direction of $\m$) is $\inter F$. In other words,
$\m$ illuminates every point of $\bd (T(\PP) ) \setminus F$. Note that if there are three directions $\m_1, \m_2, \m_3$ which illuminate $F$ in the plane containing it, then the vectors $\m_i - \varepsilon \m$, where $i=1,2,3$, illuminate every point of $F$ in $\Eu^3$. On the other hand, by a result of Levi \cite{levi1}, apart from parallelograms, the illumination number of every plane convex body is $3$.

Thus, to prove the assertion we need to show that, using a suitable point $\p$ in $F_1$, the first face $F$ of $T_{\p}(\PP)$ is not a parallelogram.
Assume that $F$ is a parallelogram. Then clearly, the first face of $\PP$ is a quadrangle, and thus, there are four tangency points on the first face circle $f_1$ of $\PP$. Let these points be $\q_1,\q_2,\q_3,\q_4$ in cyclic order. Let the unique hyperbolic line with ideal points $\q_1,\q_3$ be denoted by $L_1$, and the line with ideal points $\q_2,\q_4$ be denoted by $L_2$. These lines are contained in $F_1$.

Since $F$ is a parallelogram and is circumscribed about a circle, $F$ is a rhombus, and the tangent points $T(\q_1),T(\q_2), T(\q_3)$ and $T(\q_4)$ are the vertices of a rectangle. Thus, the midpoint of both Euclidean open segments $(T(\q_1),T(\q_3))$ and $(T(\q_2),T(\q_4))$ is the origin $\o$.
Note that these segments represent the hyperbolic lines $h_{\p}(L_1)$ and $h_{\p}(L_2)$. Thus, in this case $\o$ is the intersection point of $h_{\p}(L_1)$ and $h_{\p}(L_2)$. Since hyperbolic isometries preserve incidences, it follows that $\p$ is the intersection point of $L_1$ and $L_2$. On the other hand, as $P_1$ contains infinitely many relative interior points, we may choose some point in $\relint P_1$ different from this point, implying that in this case the first face $F$ of $T_{\p}(\PP)$ is not a parallelogram.

\section{Appendix: Proof of Theorem~\ref{dual-description}}

The following proof is a spherical analogue of the proof of the Separation Lemma in \cite{bezdek-conj1}.

\begin{defn}
Let $\K \subset \S^d \subset \Eu^{d+1}$ be a convex body and $F$ be an exposed face of $\K$. We define the conjugate face of F as a subset of the polar convex body ${\K}^*= \{ \x \in \S^d : \langle \x, \y \rangle \leq 0 \hbox{ for all } \y \in \K \}$ given by
\begin{equation}\label{conjugate}
\hat{F}:=\{\x\in{\K}^*\ |\ \langle\x ,\y\rangle=0\ {\text for\  all}\ \y\in F\}.
\end{equation}
\end{defn}
\noindent One should keep in mind that $\hat{F}$ depends also on $\K$ and not only on $F$. So, if we write $\hat{\hat{F}}$, then it means
$(\hat{F})^{\hat{}}$, where the right-hand circumflex refers to the spherical polar body ${\K}^*$. If $\x\in\S^d$, then let $\HHH_{\x}$ denote the open hemisphere of $\S^d$ with center $\x$.

The following statement (which one can regard as a natural spherical analogue of Theorem 2.1.4 in \cite{Sc}) shows that exposed faces behave well under polarity in spherical spaces. 

\begin{prop}\label{exposed-face-representation}
Let $\K \subset \S^d \subset \Eu^{d+1}$ be a convex body and $F$ be an exposed face of $\K$. Then $\hat{F}$ is an exposed face of ${\K}^*$ with $\hat{F}=\bigcap_{\y\in F}\left(\bd{\HHH_{\y}}\cap{\K}^*\right)$, where $\S^d\setminus\HHH_{\y}$ is a closed supporting hemisphere of ${\K}^*$ for every $\y\in F$. Moreover,  $\hat{\hat{F}}=F$ and so, $F\mapsto \hat{F}$ is a bijection between the exposed faces of $\K$ and ${\K}^*$.
\end{prop}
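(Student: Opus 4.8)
The plan is to establish Proposition~\ref{exposed-face-representation} by directly unpacking the definition of the conjugate face and reducing everything to facts about supporting hemispheres and their correspondence with supporting hyperplanes in $\Eu^{d+1}$. First I would observe that since $F$ is an exposed face of $\K$, there is a supporting $(d-1)$-greatsphere $\S^d\cap\y_0^\perp$ of $\K$ with $F=\K\cap\y_0^\perp$ for a suitable $\y_0\in\S^d$; choosing signs so that $\K\subset\overline{\HHH_{-\y_0}}$, i.e. $\langle\y_0,\z\rangle\le 0$ for all $\z\in\K$, we get $\y_0\in\K^*$. More generally, for \emph{every} $\y\in F$ one has $\langle\y,\x\rangle\le 0$ for all $\x\in\K^*$ by definition of $\K^*$, so $\S^d\setminus\HHH_{\y}=\overline{\HHH_{-\y}}$ is a closed supporting hemisphere of $\K^*$; this already gives the inclusion $\K^*\cap\bd\HHH_{\y}\supseteq\{\x\in\K^*:\langle\x,\y\rangle=0\}$ and hence $\hat F\subseteq\bigcap_{\y\in F}(\bd\HHH_{\y}\cap\K^*)$. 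The reverse inclusion is immediate from \eqref{conjugate}: a point of $\K^*$ lying on $\bd\HHH_{\y}$ for every $\y\in F$ is exactly a point $\x\in\K^*$ with $\langle\x,\y\rangle=0$ for all $\y\in F$. So $\hat F=\bigcap_{\y\in F}(\bd\HHH_{\y}\cap\K^*)$.

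Next I would show $\hat F$ is an exposed face of $\K^*$. The point is that an intersection of supporting greatspheres of a spherical convex body, taken over a compact set of normal directions, is again cut out by a \emph{single} supporting greatsphere. Concretely, pick finitely many $\y_1,\dots,\y_k\in F$ whose spherical (equivalently, positive) hull already yields the same linear span as all of $F$ — possible since $F$ is contained in an open hemisphere, so lifting to $\Eu^{d+1}$, $F$ spans the same linear subspace as finitely many of its points. Then let $\y^*$ be a convex combination of $\y_1,\dots,\y_k$ with strictly positive coefficients, normalized to lie on $\S^d$ (it does lie in the open hemisphere containing $F$, so the normalization is well-defined and $\y^*\in F$ since $F$ is spherically convex). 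For $\x\in\K^*$ we have $\langle\x,\y_i\rangle\le 0$ for all $i$, hence $\langle\x,\y^*\rangle=0$ if and only if $\langle\x,\y_i\rangle=0$ for every $i$, if and only if $\langle\x,\y\rangle=0$ for every $\y\in F$. Therefore $\hat F=\K^*\cap\bd\HHH_{\y^*}=\K^*\cap(\S^d\cap(\y^*)^\perp)$, which is precisely the intersection of $\K^*$ with a supporting $(d-1)$-greatsphere; so $\hat F$ is an exposed face of $\K^*$.

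It remains to prove $\hat{\hat F}=F$, and then bijectivity follows formally (the map $F\mapsto\hat F$ and its analogue for $\K^*$ are mutually inverse, using $(\K^*)^*=\K$ from Remark~\ref{reduction-to-Euclidean}). For the inclusion $F\subseteq\hat{\hat F}$: every $\y\in F$ satisfies $\langle\y,\x\rangle=0$ for all $\x\in\hat F$ by \eqref{conjugate}, and $\y\in(\K^*)^*=\K$, so $\y\in\hat{\hat F}$. For the reverse inclusion $\hat{\hat F}\subseteq F$: by the representation just proved, $\hat F=\K^*\cap(\y^*)^\perp$ with $\y^*\in F\subseteq\K$, so applying the same representation one more time, $\hat{\hat F}=\hat{(\hat F)}$ is cut out on $\K=(\K^*)^*$ by the supporting greatsphere orthogonal to some positive combination $\x^*$ of points of $\hat F$; but $\langle\y^*,\x\rangle=0$ for all $\x\in\hat F$ forces $\langle\y^*,\x^*\rangle=0$, so $\y^*$ lies on that supporting greatsphere, i.e. $\S^d\cap(\x^*)^\perp$ is a supporting greatsphere of $\K$ through $\y^*\in F$; since $F$ is an exposed face, $F=\K\cap(\y_0)^\perp$, and one checks $(\x^*)^\perp\cap\K$ must coincide with this $F$ by comparing which faces they expose — alternatively, argue that both $F$ and $\hat{\hat F}$ are exposed faces and $F\subseteq\hat{\hat F}$ together with $\dim F=\dim\hat{\hat F}$ (a dimension count via $\dim F+\dim\hat F=d-1$, which itself follows from the orthogonality relations) forces equality.

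The main obstacle I anticipate is the $\hat{\hat F}=F$ step, specifically making the dimension bookkeeping rigorous: one must be careful that "exposed face of $\K^*$" is defined via supporting greatspheres of $\S^d$, and that the orthogonal-complement dimension count behaves correctly for spherically convex bodies contained in an open hemisphere (this is where lifting to the convex cone generated by $\K$ in $\Eu^{d+1}$ and invoking the Euclidean Theorem 2.1.4 of \cite{Sc} cleanly is probably the most economical route). The spherical-convexity hypothesis (everything inside an open hemisphere) is essential and should be used explicitly to guarantee that the positive combinations $\y^*,\x^*$ can be normalized onto $\S^d$ and land in the relevant faces; I would flag that as the one place where the argument genuinely differs from the purely linear-algebraic Euclidean statement.
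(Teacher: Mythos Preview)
Your representation $\hat F=\bigcap_{\y\in F}(\bd\HHH_{\y}\cap\K^*)$ and your argument that $\hat F$ is exposed are correct; in fact your Step~2 is more careful than the paper's, which simply asserts that $\hat F$ is exposed because each $\K^*\cap\bd\HHH_{\y}$ is, without justifying why the intersection over all $\y\in F$ remains exposed. Your device of choosing a single $\y^*\in F$ with $\hat F=\K^*\cap\bd\HHH_{\y^*}$ is exactly the right way to fill that in.

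Where you go astray is the reverse inclusion $\hat{\hat F}\subseteq F$. The dimension count $\dim F+\dim\hat F=d-1$ is \emph{false} for general convex bodies (take any smooth strictly convex body: every exposed face and its conjugate are $0$-dimensional), so that route does not work, and ``comparing which faces they expose'' is not an argument. The observation you are missing, and presumably what the paper means by ``straightforward'', is much simpler: since $F$ is exposed, $F=\K\cap\bd\HHH_{\x_0}$ for some $\x_0\in\K^*$, and then $\x_0\in\hat F$ by definition. Hence every $\z\in\hat{\hat F}\subseteq\K$ satisfies $\langle\z,\x_0\rangle=0$, i.e.\ $\z\in\K\cap\bd\HHH_{\x_0}=F$. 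No lifting to $\Eu^{d+1}$ and no appeal to \cite{Sc} is needed.
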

\begin{proof} Without loss of generality we may assume that $F$ is a proper exposed face of $\K$, i.e., there exists an open hemisphere $\HHH_{\x_0}$ of $\S^d$ such that $\emptyset\neq F=\K\cap\bd\HHH_{\x_o}$ and $\K\cap\HHH_{\x_0}=\emptyset$. It follows that $\x_0\in\hat{F}$ and so, $\hat{F}\neq\emptyset$. Now, if $\y\in F$, then $\K^*\cap\HHH_{\y}=\emptyset$ and $\x\in\bd \HHH_{\y}$ holds for all $\x\in\hat{F}$. Thus, $\hat{F}\subseteq\bigcap_{\y\in F}(\K^*\cap\bd \HHH_{\y})$, where $\K^*\cap\HHH_{\y}=\emptyset$ holds for all $\y\in F$. On the other hand, if $\z\in \bigcap_{\y\in F}(\K^*\cap\bd \HHH_{\y})$ with $\K^*\cap\HHH_{\y}=\emptyset$ for all $\y\in F$, then $\z\in\K^*$ with $\langle\z,\y\rangle=0$ for all $\y\in F$ (and therefore $\z\in\hat{F}$) implying that $\bigcap_{\y\in F}(\K^*\cap\bd \HHH_{\y})\subseteq\hat{F}$. Thus, $\hat{F}=\bigcap_{\y\in F}(\K^*\cap\bd \HHH_{\y})$ with $\K^*\cap\HHH_{\y}=\emptyset$ for all $\y\in F$. As $\K^*\cap\bd \HHH_{\y}$ is a proper exposed face of $\K^*$ for all $\y\in F$ therefore $\hat{F}$ is a proper exposed face of $\K^*$.  Applying the above argument to the exposed face $\hat{F}$ of $\K^*$ one obtains in a straightforward way that $\hat{\hat{F}}=F$. This completes the proof of Proposition~\ref{exposed-face-representation}.
\end{proof}

\begin{prop}\label{separation-by-hemisphere}
Let $\K \subset \S^d \subset \Eu^{d+1}$ be a convex body and $H$ be a $(d-1)$-dimensional greatsphere of $\S^d$ with $H\cap\K=\emptyset$. Then
$\q \in \bd \K$ is illuminated from $\p\in H$ with $\p\neq-\q$ if and only if $\hat{F}\subset\HHH_{\p}$, where $F$ denotes the exposed face of $\K$ having smallest dimension and containing $\q\in\bd \K$.
\end{prop}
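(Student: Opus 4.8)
The plan is to unravel both sides of the claimed equivalence in terms of inner products, using the fact that illumination in $\S^d$ is a statement about the great circle through $\p$ and $\q$. First I would set up coordinates in $\Eu^{d+1}$: a point $\q\in\bd\K$ is illuminated from $\p\in H$ (with $\p\neq -\q$) precisely when the great circle through $\p$ and $\q$ meets $\inter\K$ but the shorter arc from $\p$ to $\q$ does not. Parametrizing that great circle as $\gamma(t)=(\cos t)\q+(\sin t)\u$ for a suitable unit vector $\u$ orthogonal to $\q$ in the plane $\mathrm{span}\{\p,\q\}$, one checks that the points of the circle ``on the far side'' of $\q$ from $\p$ — i.e.\ those obtained by moving away from $\p$ through $\q$ — are the ones that can lie in $\inter\K$. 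Translating this, illumination of $\q$ from $\p$ means exactly that moving from $\q$ in the direction $-\proj(\p)$, where $\proj$ denotes projection onto $\q^\perp$, immediately enters $\inter\K$; equivalently, that the vector $\p$ lies strictly on the opposite side of every supporting greatsphere of $\K$ at $\q$ from $\K$ itself.

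The second step is to recall from Proposition~\ref{exposed-face-representation} that the supporting open hemispheres of $\K$ at $\q$ are exactly those $\HHH_{\y}$ with $\y$ ranging over the conjugate face $\hat F$, where $F$ is the smallest exposed face of $\K$ containing $\q$: indeed $\hat F = \{\y\in\K^* : \langle \y,\x\rangle = 0 \text{ for all }\x\in F\}$, and $\S^d\setminus\HHH_{\y}$ is a closed supporting hemisphere of $\K$ for each such $\y$, with $\q\in\bd\HHH_{\y}$ (since $\q\in F$). Now the condition ``$\p$ is strictly on the far side of every supporting greatsphere of $\K$ at $\q$'' becomes: $\langle \p,\y\rangle > 0$ for every $\y\in\hat F$ — the sign being positive because $\K$ sits in $\S^d\setminus\HHH_{\y}$, i.e.\ $\langle \cdot,\y\rangle\le 0$ on $\K$, so the ``far side'' is where $\langle\cdot,\y\rangle>0$. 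But $\langle\p,\y\rangle>0$ for all $\y\in\hat F$ says exactly that $\hat F\subset\HHH_{\p}$. So the equivalence reduces to showing that the supporting greatspheres of $\K$ at $\q$ are precisely $\{\bd\HHH_{\y}:\y\in\hat F\}$, together with the elementary geometric reformulation of illumination in the first step.

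I would then carefully argue the two implications. For the forward direction, if $\q$ is illuminated from $\p$, pick any $\y\in\hat F$; since $\S^d\setminus\HHH_\y$ supports $\K$ at $\q$ and $\q\in\bd\HHH_\y$, the arc from $\p$ to $\q$ staying out of $\inter\K$ while the great circle enters it forces the tangent direction of the circle at $\q$ pointing away from $\p$ to point into $\HHH_\y$, hence $\langle\p,\y\rangle>0$; as $\y$ was arbitrary, $\hat F\subset\HHH_\p$. For the converse, suppose $\hat F\subset\HHH_\p$. Using the representation $F=\hat{\hat F}=\bigcap_{\y\in F}(\bd\HHH_{\y}\cap\K)$ dually, one shows that moving from $\q$ along the great circle toward $-\proj(\p)$ decreases $\langle\cdot,\y\rangle$ strictly below $0$ for the active constraints and stays $\le 0$ otherwise, so a short arc on that side lies in $\inter\K$, while on the $\p$-side it stays outside; hence $\q$ is illuminated from $\p$. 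The main obstacle I expect is the converse: one needs that $\hat F\subset\HHH_\p$ is enough to guarantee that the open hemisphere at $\q$ one ``rotates into'' actually contains interior points of $\K$ and not merely boundary points — this requires knowing that $F$, being the \emph{smallest} exposed face containing $\q$, has $\q$ in its relative interior (so that the only binding supporting greatspheres at $\q$ are those associated with $\hat F$), which is where the choice of $F$ as the smallest-dimensional exposed face through $\q$ is essential, and where some care with exposed-versus-general faces is needed.
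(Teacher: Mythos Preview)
Your approach is correct and follows essentially the same route as the paper: identify the supporting greatspheres of $\K$ at $\q$ with the points of $\hat F$ (using that $F$ is the smallest exposed face through $\q$, so that $\hat F=\{\y\in\K^*:\langle\y,\q\rangle=0\}$), then observe that illumination is equivalent to $\langle\p,\y\rangle>0$ for all $\y\in\hat F$, which is the same as $\hat F\subset\HHH_{\p}$.

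The one difference worth noting is that the paper avoids your tangent-vector/first-order argument entirely. Instead of parametrizing the great circle and arguing that moving in the direction $-\proj(\p)$ ``immediately enters $\inter\K$'', the paper works globally with the half-open spherical segment $[\p,\q)$: since $\q\in\bd\HHH_{\x}$ for every $\x\in\hat F$, the whole segment $[\p,\q)$ lies in $\HHH_{\x}$ if and only if $\p$ does. Illumination of $\q$ from $\p$ is then declared equivalent to $[\p,\q)\subset\bigcap_{\x\in\hat F}\HHH_{\x}$, which collapses immediately to $\p\in\bigcap_{\x\in\hat F}\HHH_{\x}$. This sidesteps the compactness argument you anticipate for the converse (controlling the non-active constraints $\y\in\K^*\setminus\hat F$ uniformly); the paper's version simply does not need it, although it is terse about why the segment condition is equivalent to illumination in the first place. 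Your more explicit treatment of that equivalence, and your observation that the minimality of $F$ is exactly what makes $\hat F$ capture \emph{all} supporting greatspheres at $\q$, actually fill in points the paper leaves implicit.
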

\begin{proof} Let $\HHH_{\h}$ be  the open hemisphere with center $\h$ and boundary $H$ in $\S^d$ such that $\K\subset\HHH_{\h}$.   
Clearly, $-\h\in\bd{\HHH_{\p}}\cap\inter{\K^*} $. Proposition~\ref{exposed-face-representation} implies that $F=\bigcap_{\x\in \hat{F}}\left(\bd{\HHH_{\x}}\cap{\K}^*\right)$, where $\HHH_{\x}\cap\K=\emptyset$ for all $\x\in\hat{F}$. Let $[\p,\q)$ denote the spherical segment of $\S^d$ with endpoints $\p$ and $\q$ containing $\p$ and not containing $\q$. Now, $\q \in \bd \K$ is illuminated from $\p\in H$ if and only if 
$[\p,\q)\subset\bigcap_{\x\in\hat{F}}\HHH_{\x}$, which is equivalent to $\p\in\bigcap_{\x\in\hat{F}}\HHH_{\x}$ (because $\pm\q\in \bigcap_{\x\in\hat{F}}\bd \HHH_{\x}$). Finally,  $\p\in\bigcap_{\x\in\hat{F}}\HHH_{\x}$ holds if and only if $\hat{F}\subset\HHH_{\p}$. This finishes the proof of Proposition~\ref{separation-by-hemisphere}.
\end{proof}

Now, the following statement follows from Proposition~\ref{separation-by-hemisphere} and its proof in a straightforward way.

\begin{cor}\label{separation-by-hemispheres}
Let $\K \subset \S^d \subset \Eu^{d+1}$ be a convex body and $H$ be a $(d-1)$-dimensional greatsphere of $\S^d$ with $H\cap\K=\emptyset$. Let $\HHH_{\h}$ be  the open hemisphere with center $\h$ and boundary $H$ in $\S^d$ such that $\K\subset\HHH_{\h}$. Then the point set $\{\p_1,\dots ,\p_n\}\subset H$ illuminates $\K$ if and only if the open hemispheres $\HHH_{\p_1},\dots,\HHH_{\p_n}$ with $-\h\in\bd{\HHH_{\p_i}}\cap\inter{\K^*} $ for $1\leq i\leq n$ have the property that every (proper) exposed face of $\K^*$ is contained in at least one of the open hemispheres.
\end{cor}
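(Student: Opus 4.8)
The plan is to combine Proposition~\ref{separation-by-hemisphere} with the polarity bijection between exposed faces supplied by Proposition~\ref{exposed-face-representation}, running the argument face by face.

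First I would dispose of the side condition. From $\p_i \in H = \bd\HHH_{\h} = \{\x\in\S^d : \langle\x,\h\rangle = 0\}$ we get $\langle -\h, \p_i\rangle = 0$, i.e. $-\h \in \bd\HHH_{\p_i}$; and since the open hemisphere $\HHH_{\h}$ contains $\K$, Remark~\ref{reduction-to-Euclidean} gives $\h \in -\inter\K^*$, i.e. $-\h \in \inter\K^*$. (This is exactly the computation already carried out inside the proof of Proposition~\ref{separation-by-hemisphere}.) Hence the clause ``$-\h \in \bd\HHH_{\p_i} \cap \inter\K^*$'' holds automatically for every $\{\p_1,\dots,\p_n\}\subset H$ and carries no information, so the substance of the corollary is the equivalence between illumination of $\K$ by $\{\p_1,\dots,\p_n\}$ and the covering of all proper exposed faces of $\K^*$ by $\HHH_{\p_1},\dots,\HHH_{\p_n}$. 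I would also observe that, because $\K \subset \HHH_{\h}$ while $H = \bd\HHH_{\h}$ is disjoint from $\K$, every $\q \in \bd\K$ lies in $\HHH_{\h}$, so $-\q \in -\HHH_{\h}$ is disjoint from $H$; thus $\p_i \neq -\q$ for all $i$, and Proposition~\ref{separation-by-hemisphere} applies to each pair $(\q,\p_i)$ without the antipodality caveat.

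Next I would rewrite the illumination condition in terms of exposed faces of $\K$. By Definition~\ref{defn:spherical_ill}, $\{\p_1,\dots,\p_n\}$ illuminates $\K$ iff every $\q \in \bd\K$ is illuminated from some $\p_i$, which by Proposition~\ref{separation-by-hemisphere} is the same as: for every $\q \in \bd\K$ there is an index $i$ with $\hat{F}(\q) \subset \HHH_{\p_i}$, where $F(\q)$ denotes the exposed face of $\K$ of smallest dimension containing $\q$. The quantifier ``for all $\q \in \bd\K$'' may be replaced by ``for all proper exposed faces $F$ of $\K$'': every proper exposed face $F$ equals $F(\q)$ for $\q$ any point of $\relint F$, while conversely $F(\q)$ is always a proper exposed face of $\K$. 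Therefore $\{\p_1,\dots,\p_n\}$ illuminates $\K$ iff for every proper exposed face $F$ of $\K$ some $\HHH_{\p_i}$ contains $\hat F$.

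Finally I would invoke Proposition~\ref{exposed-face-representation}: the map $F \mapsto \hat F$ is a bijection from the proper exposed faces of $\K$ onto the proper exposed faces of $\K^*$, with $\hat{\hat F} = F$. Hence, writing $G = \hat F$, the condition of the previous step is equivalent to: for every proper exposed face $G$ of $\K^*$ some $\HHH_{\p_i}$ contains $G$, which is precisely the stated property. I do not expect a genuine obstacle here; the only points that need care are the automatic non-antipodality $\p_i \neq -\q$ (so that Proposition~\ref{separation-by-hemisphere} is applicable) and the exactness, in both directions, of the passage from boundary points of $\K$ to proper exposed faces of $\K$ — both immediate from the two propositions already established.
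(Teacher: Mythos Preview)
Your proposal is correct and follows essentially the same route the paper has in mind: the paper simply says the corollary ``follows from Proposition~\ref{separation-by-hemisphere} and its proof in a straightforward way,'' and what you have written is exactly that straightforward derivation, made explicit by invoking the bijection $F\mapsto\hat F$ of Proposition~\ref{exposed-face-representation} (which already appears inside the proof of Proposition~\ref{separation-by-hemisphere}). Your handling of the side condition $-\h\in\bd\HHH_{\p_i}\cap\inter\K^*$ and of the non-antipodality $\p_i\neq -\q$ is also in line with the remarks in the proof of Proposition~\ref{separation-by-hemisphere}.
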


Finally, Corollary~\ref{separation-by-hemispheres} yields Theorem~\ref{dual-description}.

\bigskip 

{\bf Acknowledgements}

We are indebted to the anonymous referee for careful reading and valuable comments.

\small

\bigskip


\noindent K\'aroly Bezdek \\
\small{Department of Mathematics and Statistics, University of Calgary, Canada}\\
\small{Department of Mathematics, University of Pannonia, Veszpr\'em, Hungary\\
\small{E-mail: \texttt{bezdek@math.ucalgary.ca}}

\bigskip

\noindent and

\bigskip

\noindent Zsolt L\'angi \\
\small{MTA-BME Morphodynamics Research Group and Department of Geometry}\\ 
\small{Budapest University of Technology and Economics, Budapest, Hungary}\\
\small{\texttt{zlangi@math.bme.hu}}

\end{document}